\newtheorem{Theorem}{Theorem}[section]
\newtheorem{definition}[Theorem]{Definition}
\newtheorem{proposition}[Theorem]{Proposition}
\newtheorem{corollary}[Theorem]{Corollary}
\newtheorem{lemma}[Theorem]{Lemma}
\newtheorem{example}[Theorem]{Example}
\newtheorem{remark}[Theorem]{Remark}
\newcommand{\K}{\Bbb K}
\newcommand{\Z}{\Bbb Z}
\def\NL{\hfill\break}
\newcommand{\set}[1]{\left\{#1\right\}}
\newcommand{\parth}[1]{\left(#1\right)}
\title{Skew polynomial rings over idempotent reflexive and abelian rings}
\author{Mohamed Louzari}
\begin{document}
\date{}
\maketitle
\centerline{Department of mathematics, Faculty of sciences}

\centerline{Abdelmalek Essaadi University}

\centerline{B.P. 2121 Tetouan, Morocco}

\centerline{mlouzari@yahoo.com}

\begin{abstract}
Let $R$ be a ring and $\sigma$ an endomorphism of $R$. In this note, we show that skew polynomial rings and skew power series rings over idempotent reflexive rings are idempotent reflexive when $\sigma(Re)\subseteq Re$ for all $e^2=e\in R$. In addition, we have the equivalence for $\sigma$-compatible rings. It is also proved that, if $R$ satisfies the condition $(\mathcal{C_{\sigma}})$ then skew polynomial rings and skew power series rings over abelian rings are abelian.

\end{abstract}

\footnote[0]{\NL 2010 Mathematics Subject Classification. 16U80, 16S36
\NL Key words and phrases. Idempotent reflexive rings, Abelian rings, Skew polynomial rings, Skew power series rings, Ore extensions, $\sigma$-idempotent reflexive rings, $\sigma$-abelian rings}

\section{Introduction}

Throughout this paper, $R$ denotes an associative ring with unity. Let $Id(R)$ be the set of all idempotent elements of $R$. A ring $R$ is called abelian if all idempotent of it are central (i.e., $re=er$ for all $r\in R$ and $e\in Id(R)$).  Following Hashemi and Moussavi \cite{hashemi/quasi}, a ring $R$ is $\sigma$-{\it compatible} if for each $a,b\in R$, $ab=0$ if and only if $a\sigma(b)=0$. Agayev et al. \cite{agayev/2009}, introduced $\sigma$-abelian rings as a generalization of abelian rings. A ring $R$ is called $\sigma$-abelian if it is abelian and $\sigma$-compatible. A left ideal $I$ is said to be {\it reflexive} \cite{mason}, if $aRb\subseteq I$ implies $bRa\subseteq I$ for $a,b\in R$. A ring $R$ is called {\it reflexive} if $0$ is a reflexive ideal. The reflexive property for ideals was first studied by Mason \cite{mason}, this concept was generalized by Kim and Baik \cite{kim/2005,kim/2006}, and they introduced the concept of idempotent reflexive right ideals and rings. A left ideal $I$ is called {\it idempotent reflexive} \cite{kim/2005}, if $aRe\subseteq I$ implies $eRa\subseteq I$ for $a,e=e^2\in R$. A ring $R$ is called {\it idempotent reflexive} if $0$ is an idempotent reflexive ideal. Also, there is an example of an idempotent reflexive ring which is not reflexive \cite[Example 5]{kim/2005}. Kwak and Lee \cite{kwak/2012}, introduced the concept of left and right idempotent reflexive rings. A ring $R$ is called {\it left idempotent reflexive} if $eRa=0$ implies $aRe=0$ for $a,e=e^2\in R$. Right idempotent reflexive rings are defined similarly. If a ring $R$ is left and right idempotent reflexive then it is called an idempotent reflexive ring. It can be easily checked that every reflexive ring is an idempotent reflexive ring. Abelian rings (i.e., its idempotents are central) are idempotent reflexive and hence semicommutative rings are idempotent reflexive since semicommutative rings are abelian. For a subset $X$ of $R$, $r_R(X)=\{a\in R|Xa=0\}$ and $\ell_R(X)=\{a\in R|aX=0\}$ will stand for the right and the left annihilator of $X$ in $R$ respectively. Recall that a ring $R$ is satisfying the condition $(\mathcal{C_{\sigma}})$ if whenever $a\sigma(b)=0$ with $a,b\in R$, then $ab=0$ \cite{louzari}. Clearly, $\sigma$-compatible rings satisfy the condition $(\mathcal{C_{\sigma}})$.
\par For an endomorphism $\sigma$ of a ring $R$, a {\it skew polynomial} ring (also called an {\it Ore extension of endomorphism type}) $R[x;\sigma]$ of $R$ is the ring obtained by giving the polynomial ring over $R$ with the new multiplication $xr=\sigma(r)x$ for all $r\in R$. Also, a {\it skew power series ring} $R[[x;\sigma]]$ is the ring consisting of all power series of the form $\sum_{i=0}^{\infty}a_ix^i\;(a_i\in R)$, which are multiplied using the distributive law and the Ore commutation rule $xa=\sigma(a)x$, for all $a\in R$.
\par In this paper, we study skew polynomial rings and skew power series rings over idempotent reflexive rings and abelian rings. Also, we used the idea of Agayev et al. \cite{agayev/2009} to introduce the concept of right (resp., left) $\sigma$-idempotent reflexive rings which is a generalization of right (resp., left) idempotent reflexive rings and $\sigma$-abelian rings. Agayev et al. \cite{agayev/2009}, studied abelian and $\sigma$-abelian properties on skew polynomial rings. Also, Kwak and Lee \cite{kwak/2012}, investigated reflexive and idempotent reflexive properties on ordinary polynomial extensions. In this work, we continue studding abelian and idempotent reflexive properties on skew polynomial and skew power series extensions.
%\begin{enumerate}
%  \item Assume that $\sigma(Re)\subseteq Re$ for all $e\in Id(R)$. Then
%\begin{enumerate}
%\item If $R$ is left idempotent reflexive then $R[x,\sigma]$ and $R[[x,\sigma]]$ are left idempotent reflexive.
%\item If $R[x,\sigma]$ or $R[[x,\sigma]]$ is right idempotent reflexive then $R$ is right idempotent reflexive.
%\end{enumerate}
%  \item  Let $R$ be a $\sigma$-compatible ring. Then $R$ is idempotent reflexive if and only if $R[x,\sigma]$ is idempotent reflexive if and only if $R[[x,\sigma]]$ is idempotent reflexive.
%  \item If $R$ satisfies the condition $(\mathcal{C_{\sigma}})$. Then $R$ is abelian if and only if $R[x,\sigma]$ is abelian if and only if $R[[x,\sigma]]$ is abelian.
%\end{enumerate}

%%%%%%%%%%%%%%%%%%%%%%%%%%%%%%%%%%%%%%%%%%%%%%%%%%%%%%%
\section{Idempotent reflexive property}              %%
%%%%%%%%%%%%%%%%%%%%%%%%%%%%%%%%%%%%%%%%%%%%%%%%%%%%%%%

In \cite[Theorem 3.1]{kwak/2013}, a ring $R$ is idempotent reflexive if and only if $R[x]$ is idempotent reflexive if and only if $R[[x]]$ is idempotent reflexive. One might expect if idempotent reflexive property can be preserved under skew polynomial and skew power series extensions.

\begin{proposition}\label{prop1}Let $R$ be a ring and $\sigma$ an endomorphism of $R$ such that $\sigma(Re)\subseteq Re$ for all $e\in Id(R)$.
\par$(1)$ If $R$ is left idempotent reflexive then $R[x,\sigma]$ and $R[[x,\sigma]]$ are left idempotent reflexive.
\par$(2)$ If $R[x,\sigma]$ or $R[[x,\sigma]]$ is right idempotent reflexive then $R$ is right idempotent reflexive.
\end{proposition}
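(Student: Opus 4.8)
The plan is to reduce every case to a short computation with the coefficients of the polynomials or power series involved, after first normalizing the idempotent. Write $S$ for $R[x;\sigma]$ or for $R[[x;\sigma]]$. I would begin by noting that $\sigma(Re)\subseteq Re$ iterates to $\sigma^{k}(Re)\subseteq Re$ for all $k\ge 0$; since $e=1\cdot e\in Re$ this gives $\sigma^{k}(e)\in Re$, say $\sigma^{k}(e)=t_{k}e$ with $t_{k}\in R$, for every $e\in Id(R)$ and $k\ge 0$. This is the only place the hypothesis on $\sigma$ enters.

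For $(2)$ I would argue directly. Assume $S$ is right idempotent reflexive and let $a,e=e^{2}\in R$ with $aRe=0$, regarded as constant elements of $S$. For $g=\sum_{k}g_{k}x^{k}\in S$ we have $age=\sum_{k}ag_{k}\sigma^{k}(e)x^{k}=\sum_{k}ag_{k}t_{k}e\,x^{k}$, every coefficient lying in $aRe=0$; hence $aSe=0$. As $e$ is an idempotent of $S$, right idempotent reflexivity of $S$ yields $eSa=0$, and reading off constant terms gives $eRa=0$. The power series case is identical, each coefficient being a finite sum.

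For $(1)$ I would first settle constant idempotents. Given $e=e^{2}\in R$ and $f=\sum_{j}f_{j}x^{j}\in S$ with $eSf=0$, specializing $g=r\in R$ gives $erf=\sum_{j}(erf_{j})x^{j}=0$, so $eRf_{j}=0$ for all $j$; left idempotent reflexivity of $R$ then gives $f_{j}Re=0$ for all $j$. For an arbitrary $g=\sum_{k}g_{k}x^{k}\in S$, the coefficient of $x^{p}$ in $fge$ equals $\sum_{j+k=p}f_{j}\sigma^{j}(g_{k})\sigma^{j+k}(e)=\sum_{j+k=p}f_{j}(\sigma^{j}(g_{k})t_{j+k})e\in f_{j}Re=0$, so $fge=0$ and thus $fSe=0$. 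This works verbatim for $S=R[[x;\sigma]]$, each coefficient again being a finite sum.

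It then remains to pass from constant idempotents to arbitrary ones, and I expect this to be the main obstacle. For $S=R[[x;\sigma]]$ I would use that the ideal $Sx$ of power series with zero constant term is two-sided with $S/Sx\cong R$, and that $S$ is complete and Hausdorff in the $Sx$-adic topology; hence idempotents lift along $S\to R$ and any two idempotents of $S$ congruent modulo $Sx$ are conjugate by a unit of $1+Sx$. An idempotent $E$ of $S$ thus equals $ueu^{-1}$ for a unit $u$ and its (idempotent) constant term $e\in Id(R)$, and since $uS=Su=S$ one gets $ESf=0\iff eSf=0$ and $fSE=0\iff fSe=0$, which reduces to the previous step. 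For $S=R[x;\sigma]$: if $E$ is an idempotent of $R[x;\sigma]$ with $E\,R[x;\sigma]\,f=0$, then in fact $E\,R[[x;\sigma]]\,f=0$, because the coefficient of $x^{p}$ in $Egf$ depends only on $g_{0},\dots,g_{p}$, hence equals the corresponding coefficient of $E(\sum_{k\le p}g_{k}x^{k})f$, which vanishes since $\sum_{k\le p}g_{k}x^{k}\in R[x;\sigma]$; applying the power series case and restricting gives $f\,R[x;\sigma]\,E=0$. The obstacle is genuinely this normalization: over a skew polynomial ring idempotents need not be constant (e.g.\ $e_{11}+e_{12}x$ is a non-constant idempotent of $M_{2}(k)[x]$), so no coefficient-wise argument applies to $E$ directly, and it is the completeness of $R[[x;\sigma]]$ together with the upgrade of a polynomial annihilation condition to a power series one that makes the reduction work; the hypothesis $\sigma(Re)\subseteq Re$ is used only afterwards, in the elementary coefficient computations.
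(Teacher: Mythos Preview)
Your argument is correct, and for part~(2) it is essentially the paper's own proof. For part~(1), however, your route is genuinely different. The paper works directly with an arbitrary idempotent $e(x)=\sum_j e_jx^j$ of $R[x;\sigma]$: from $e(x)R[x;\sigma]f(x)=0$ it extracts, via successive substitutions $b=e_0s$, that $e_0Rf_i=0$ for all $i$, whence $f_iRe_0=0$ and then $f(x)R[x;\sigma]e_0=0$ by the hypothesis on $\sigma$; the remaining step is the observation that the idempotent relation $e(x)^2=e(x)$ together with $\sigma(Re_0)\subseteq Re_0$ forces each $e_j\in Re_0R$, so $f(x)R[x;\sigma]e_j=0$ follows from the $e_0$ case. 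By contrast, you first settle the case of a constant idempotent and then \emph{normalize} an arbitrary idempotent $E\in R[[x;\sigma]]$ to its constant term by conjugation, using that $Sx\subseteq J(S)$ and that idempotents congruent modulo the Jacobson radical of a ring are conjugate; for $R[x;\sigma]$ you bootstrap from the power series case by observing that $E\,R[x;\sigma]\,f=0$ upgrades coefficientwise to $E\,R[[x;\sigma]]\,f=0$. Both approaches are sound: the paper's is entirely elementary and self-contained, while yours is more structural, explains conceptually why general idempotents reduce to constant ones, and isolates the use of $\sigma(Re)\subseteq Re$ to the easy coefficient computation with constant $e$; the price is the appeal to the standard conjugacy lemma for idempotents modulo $J(S)$.
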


\begin{proof}$(1)$ Let $e(x)=\sum_{j=0}^me_jx^j\in Id(R[x,\sigma])$ and $f(x)=\sum_{i=0}^nf_ix^i\in R[x,\sigma]$. Assume that $e(x)R[x,\sigma]f(x)=0$, then $e(x)Rf(x)=0$. Thus for any $b\in R$, we have the following system of equations.
$$e_0bf_0=0\eqno(0)$$
$$e_0bf_1+e_1\sigma(bf_0)=0\qquad\qquad\;\;\;\eqno(1)$$
$$e_0bf_2+e_1\sigma(bf_1)+e_2\sigma^2(bf_0)=0\qquad\qquad\qquad\qquad\qquad\eqno(2)$$
$$\cdots$$
From equation $(0)$, $f_0\in r_R(e_0R)$. Let $s\in R$ and take $b=e_0s$ in equation $(1)$. Then $e_0sf_1+e_1\sigma(e_0sf_0)=0$, but $f_0\in r_R(e_0R)$.  So $e_0sf_1=0$, hence  $f_1\in r_R(e_0R)$. Now, in equation $(2)$, take $b=e_0s$ for each $s\in R$. Then $e_0sf_2+e_1\sigma(e_0sf_1)+e_2\sigma^2(e_0sf_0)=0$, since $f_0,f_1\in r_R(e_0R)$ so $e_0sf_2=0$. Hence $f_2\in r_R(e_0R)$. Continuing this procedure yields $f_i\in r_R(e_0R)$ for all $i=0,1,2,\cdots,n$. Hence $e_0Rf_i=0$ for all $i=0,1,2,\cdots,n$. Thus $f_iRe_0=0$ for all $i=0,1,\cdots,n$ because $R$ is left idempotent reflexive. On the other hand, $$f(x)Re_0=f_0Re_0+f_1xRe_0+\cdots+f_nx^nRe_0$$
$$\qquad\qquad\qquad\quad\;=f_0Re_0+f_1\sigma(Re_0)x+\cdots+f_n\sigma^n(Re_0)x^n$$
Since $\sigma(Re_0)\subseteq Re_0$, it follows that $f(x)Re_0\subseteq \sum_{i=0}^nf_iRe_0=0$, then $f(x)Re_0=0$. With the same method as above, we obtain $$f(x)Rxe_0=f(x)Rx^2e_0=\cdots=f(x)Rx^ne_0=0.$$ So $f(x)R[x,\sigma]e_0=0$. However, since $e(x)^2=e(x)$, we have $e_0^2=e_0,\; e_0e_1+e_1\sigma(e_0)=e_1,\: \cdots$. Using the hypothesis $``\sigma(Re_0)\subseteq Re_0"$, we have $e_j\in Re_0R$ for any $1\leq j\leq m$. Thus from the fact $f(x)R[x;\sigma]e_0=0$, we have $f(x)R[x;\sigma]e_j=0$, and therefore $f(x)R[x;\sigma]e(x)=0$. We use similar argument for the case of skew power series rings.
\par$(2)$ Suppose that $R[x;\sigma]$ (resp., $R[[x;\sigma]]$) is right idempotent reflexive. If $aRe=0$ for some $e^2=e,a\in R$ then $aR[x;\sigma]e=0$, by the hypothesis ``$\sigma(Re)\subseteq Re$". Since $R[x;\sigma]$ (resp., $R[[x;\sigma]]$) is right idempotent reflexive, we get $eR[x;\sigma]a=0$, then $eRa=0$.  Therefore $R$ is right idempotent reflexive.
\end{proof}

We can easily see that, if a ring $R$ is $\sigma$-compatible then $\sigma(e)=e$ for all $e\in Id(R)$, and so the condition ``$\sigma(Re)\subseteq Re$ for all $e\in Id(R)$", is satisfied.
\begin{proposition}\label{prop2}Let $R$ be a ring and $\sigma$ an endomorphism of $R$ such that $R$ is $\sigma$-compatible.
\par$(1)$ If $R$ is right idempotent reflexive then $R[x,\sigma]$ and $R[[x,\sigma]]$ are right idempotent reflexive.
\par$(2)$ If $R[x,\sigma]$ or $R[[x,\sigma]]$ is left idempotent reflexive then $R$ is left idempotent reflexive.
\end{proposition}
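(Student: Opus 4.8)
The plan is to run the argument of Proposition~\ref{prop1} in the opposite direction, exploiting $\sigma$-compatibility --- which is strictly stronger than the condition ``$\sigma(Re)\subseteq Re$'' --- to push \emph{right} idempotent reflexivity up and pull \emph{left} idempotent reflexivity down. I will freely use the routine consequences of $\sigma$-compatibility: that $\sigma$ is injective, that $ab=0\iff a\sigma^{k}(b)=0\iff\sigma^{k}(a)b=0$ for all $k\ge0$, and that $\sigma(e)=e$ for every $e\in Id(R)$ (the remark preceding the statement). In particular the hypothesis of Proposition~\ref{prop1} holds, so I may quote from its proof that if $e(x)=\sum_j e_jx^{j}$ is idempotent in $R[x;\sigma]$ (or in $R[[x;\sigma]]$) then $e_0^{2}=e_0$ and $e_j\in Re_0R$ for all $j\ge1$.

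For~(1), let $e(x)=\sum_j e_jx^{j}\in Id(R[x;\sigma])$, $f(x)=\sum_i f_ix^{i}$, and assume $f(x)R[x;\sigma]e(x)=0$; the goal is $e(x)R[x;\sigma]f(x)=0$. Restricting to constants yields, for each $b\in R$, the system $\sum_{i+j=n}f_i\sigma^{i}(b)\sigma^{i}(e_j)=0$, $n\ge0$. I would show $f_nRe_0=0$ by induction on $n$: the case $n=0$ is the equation $f_0be_0=0$; for the inductive step, each term with $i<n$ vanishes, because $e_{n-i}\in Re_0R$ forces $\sigma^{i}(e_{n-i})\in Re_0R$ (using $\sigma^{i}(e_0)=e_0$), making that term a finite sum of expressions of the form $f_i(\,\cdot\,)e_0(\,\cdot\,)=0$ by the inductive hypothesis $f_iRe_0=0$. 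What survives is $f_n\sigma^{n}(b)e_0=0$ for all $b$; since $\sigma^{n}(b)e_0=\sigma^{n}(b)\sigma^{n}(e_0)=\sigma^{n}(be_0)$, cancelling $\sigma^{n}$ by $\sigma$-compatibility gives $f_nbe_0=0$, i.e.\ $f_nRe_0=0$.

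Now right idempotent reflexivity of $R$, applied to the idempotent $e_0$, turns $f_iRe_0=0$ into $e_0Rf_i=0$ for every $i$; moving the $\sigma$-powers onto the $f_i$ (again by $\sigma$-compatibility) upgrades this to $e_0R[x;\sigma]f(x)=0$. Finally, writing $e_j=\sum_k a_ke_0b_k$ for $j\ge1$, one gets $e_jx^{j}h(x)f(x)=\sum_k a_ke_0\bigl(b_kx^{j}h(x)\bigr)f(x)=0$ for every $h(x)$, whence $e(x)R[x;\sigma]f(x)=0$. The skew power series case is the same, since each coefficient equation and each inductive step involves only finite sums. For~(2): if $eRa=0$ with $e=e^{2}\in R$, then for $h(x)=\sum_k b_kx^{k}$ we have $eh(x)a=\sum_k eb_k\sigma^{k}(a)x^{k}=0$ by $\sigma$-compatibility, so $eR[x;\sigma]a=0$; left idempotent reflexivity of $R[x;\sigma]$ (resp.\ $R[[x;\sigma]]$), with $e\in Id(R)\subseteq Id(R[x;\sigma])$, forces $aR[x;\sigma]e=0$, and reading off the constant coefficient gives $aRe=0$.

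I expect the one genuinely delicate point to be the last step of the induction in~(1): with the idempotent on the \emph{right}, the leading coefficient relation comes out as $f_n\sigma^{n}(b)e_0=0$, twisted on the test element $b$, and one must absorb that $\sigma^{n}$ --- exploiting $\sigma^{n}(e_0)=e_0$ together with the cancellation built into $\sigma$-compatibility --- in order to recover the untwisted relation $f_nRe_0=0$. This is precisely the asymmetry responsible for the ``left/right'' swap relative to Proposition~\ref{prop1}, where the idempotent sat on the left and no such absorption was needed; with only ``$\sigma(Re)\subseteq Re$'' the twist could not be removed, which is why $\sigma$-compatibility is genuinely required here.
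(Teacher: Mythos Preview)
Your proof is correct and follows the same strategy as the paper's: from $f(x)R[x;\sigma]e(x)=0$ deduce $f_iRe_0=0$ for all $i$ by induction on the coefficient equations, apply right idempotent reflexivity of $R$ to get $e_0Rf_i=0$, then use $\sigma$-compatibility together with $e_j\in Re_0R$ to conclude $e(x)R[x;\sigma]f(x)=0$; part~(2) is handled identically. The only difference is cosmetic: in the inductive step the paper substitutes $b=se_0$ (exploiting $e_0^2=e_0$) to kill the lower-order terms, whereas you kill them by invoking $e_{n-i}\in Re_0R$ and the inductive hypothesis directly --- both routes leave $f_n\sigma^n(be_0)=0$ and then cancel the $\sigma^n$ via compatibility.
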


\begin{proof}$(1)$ Assume that $R$ is right idempotent reflexive and let $e(x)=\sum_{i=0}^ne_ix^i\in Id(R[x,\sigma])$ and $f(x)=\sum_{j=0}^mf_jx^j\in R[x,\sigma]$ such that $f(x)R[x,\sigma]e(x)=0$. Thus we have the following system of equations where $b$ is an arbitrary element of $R$.
$$f_0be_0=0\eqno(0)$$
$$f_0be_1+f_1\sigma(be_0)=0\qquad\qquad\;\;\;\eqno(1)$$
$$f_0be_2+f_1\sigma(be_1)+f_2\sigma^2(be_0)=0\qquad\qquad\qquad\qquad\qquad\eqno(2)$$
$$\cdots$$
Equation $(0)$ yields $f_0\in \ell_R(Re_0)$. In equation $(1)$ substitute $se_0$ for $b$ to obtain $f_1\sigma(se_0)=0$, but $R$ is $\sigma$-compatible then $f_1se_0=0$ and so $f_1\in \ell_R(Re_0)$. Continuing this procedure yields $f_i\in \ell_R(Re_0)$ for all $i=0,1,\cdots,n$.
Since $R$ is right idempotent reflexive, it follows that $e_0Rf_i=0$ for all $i=0,1,\cdots,n$. We claim that $e_0R[x,\sigma]f(x)=0$. Observe that $e_0Rf(x)=\sum_{j=0}^me_0Rf_jx^j=0$ and $e_0Rxf(x)=\sum_{j=0}^me_0R\sigma(f_j)x^{j+1}=0$
because $R$ is $\sigma$-compatible. Also, we have $e_0Rx^kf(x)=0$ for all nonnegative integers $k$. Consequently $e_0\varphi(x)f(x)=0$ for all $\varphi(x)\in R[x,\sigma]$. Thus $e_0R[x,\sigma]f(x)=0$. As, in the proof of Proposition \ref{prop1}, we have $e_i\in Re_0R$ for any $1\leq i\leq n$. It follows that $e(x)R[x,\sigma]f(x)=0$. Therefore, $R[x,\sigma]$ is right idempotent reflexive.
\par$(2)$ As in the proof of Proposition \ref{prop1}(2), it suffices to replace the hypothesis ``$\sigma(Re)\subseteq Re$" by ``$\sigma$-compatible".
\end{proof}

\begin{Theorem}\label{theorem}Let $R$ be a ring and $\sigma$ an endomorphism of $R$ such that $R$ is $\sigma$-compatible. Then
\par$(1)$ $R$ is right idempotent reflexive if and only if $R[x,\sigma]$ is right idempotent reflexive if and only if $R[[x,\sigma]]$ is right idempotent reflexive.
\par$(2)$ $R$ is left idempotent reflexive if and only if $R[x,\sigma]$ is left idempotent reflexive if and only if $R[[x,\sigma]]$ is left idempotent reflexive.
\par$(3)$ $R$ is idempotent reflexive if and only if $R[x,\sigma]$ is idempotent reflexive if and only if $R[[x,\sigma]]$ is idempotent reflexive.
\end{Theorem}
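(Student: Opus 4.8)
The plan is to deduce all three equivalences directly from Propositions \ref{prop1} and \ref{prop2}, exploiting the remark preceding Proposition \ref{prop2}: if $R$ is $\sigma$-compatible, then $\sigma(e)=e$ for every $e\in Id(R)$, hence $\sigma(Re)=\sigma(R)e\subseteq Re$. Consequently, under the single hypothesis ``$R$ is $\sigma$-compatible'', the hypotheses of \emph{both} propositions are simultaneously in force, and the theorem becomes a bookkeeping matter of invoking the correct halves of each.

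For part $(1)$, if $R$ is right idempotent reflexive, then Proposition \ref{prop2}$(1)$ gives that $R[x;\sigma]$ and $R[[x;\sigma]]$ are right idempotent reflexive. Conversely, if either $R[x;\sigma]$ or $R[[x;\sigma]]$ is right idempotent reflexive, then, the condition ``$\sigma(Re)\subseteq Re$'' being available, Proposition \ref{prop1}$(2)$ shows $R$ is right idempotent reflexive; this closes the chain of equivalences. Part $(2)$ is the mirror image: if $R$ is left idempotent reflexive, Proposition \ref{prop1}$(1)$ (again using ``$\sigma(Re)\subseteq Re$'') yields that $R[x;\sigma]$ and $R[[x;\sigma]]$ are left idempotent reflexive, and conversely Proposition \ref{prop2}$(2)$ recovers left idempotent reflexivity of $R$ from that of $R[x;\sigma]$ or $R[[x;\sigma]]$. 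Part $(3)$ then follows formally, since by definition a ring is idempotent reflexive exactly when it is both left and right idempotent reflexive; combining $(1)$ and $(2)$ gives that $R$ is idempotent reflexive iff $R[x;\sigma]$ is iff $R[[x;\sigma]]$ is.

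Since the substantive work—the system-of-equations argument with the idempotent $e(x)$, the propagation of the annihilator conditions $f_i\in r_R(e_0R)$ (resp. $\ell_R(Re_0)$) through all coefficients, and the reduction from $e(x)$ to its constant term $e_0$ via $e_j\in Re_0R$—has already been done inside the two propositions, there is no real obstacle remaining. The only point needing care is the one noted above: verifying that ``$R$ is $\sigma$-compatible'' simultaneously supplies the ``$\sigma$-compatible'' hypothesis of Proposition \ref{prop2} and the ``$\sigma(Re)\subseteq Re$ for all $e\in Id(R)$'' hypothesis of Proposition \ref{prop1}.
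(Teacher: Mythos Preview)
Your proof is correct and follows exactly the paper's approach: the paper's proof consists solely of the sentence ``Obvious from Propositions \ref{prop1} and \ref{prop2}'', and you have simply spelled out how the four halves of those two propositions combine (together with the observation that $\sigma$-compatibility implies $\sigma(Re)\subseteq Re$) to give the three equivalences.
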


\begin{proof}Obvious from Propositions \ref{prop1} and \ref{prop2}.
\end{proof}

Following Agayev et al. \cite{agayev/2009}. A ring $R$ is called $\sigma$-abelian if it is abelian and $\sigma$-compatible. In the next, we use the same idea to introduce $\sigma$-idempotent reflexive rings as a generalization of idempotent reflexive rings and $\sigma$-abelian rings.

\begin{definition}Let $R$ be a ring and $\sigma$ an endomorphism of $R$.
\NL$(1)$ $R$ is called right $($resp., left$)$ $\sigma$-idempotent reflexive if it is right $($resp., left$)$ idempotent reflexive and $\sigma$-compatible.
\NL$(2)$ $R$ is called $\sigma$-idempotent reflexive if it is  both right and left $\sigma$-idempotent reflexive.
\end{definition}

A ring $R$ is right (resp., left) idempotent reflexive if $R$ is right (resp., left) $id_R$- idempotent reflexive, where $id_R$ denotes the identity endomorphism of $R$. We can easily see that $\sigma$-abelian rings are $\sigma$-idempotent reflexive.

\begin{example}\label{ex1}Consider the ring $R=\set{\left(
\begin{array}{cc}
a & b \\
0 & a \\
\end{array}
\right)\mid a,b\in \Z_4}$, where $\Z_4$ is the ring of integers modulo $4$. Let $\sigma$ an endomorphism of $R$ defined by $$\sigma\parth{\left(
\begin{array}{cc}
a & b \\
0 & a \\
\end{array}
        \right)}=\left(
          \begin{array}{cc}
a & -b \\
0 & a \\
\end{array}
        \right)\mathrm{for\;all}\;\left(
          \begin{array}{cc}
a & b \\
0 & a \\
\end{array}
        \right)\in R$$
By \cite[Example 2.2]{agayev/2009}. The ring $R$ is abelian so it is right and left idempotent reflexive. Also, $R$ is $\sigma$-compatible. Thus $R$ is right and left $\sigma$-idempotent reflexive.
\end{example}

\begin{corollary}\label{corollary1}Let $R$ be a ring and $\sigma$ an endomorphism of $R$. Then
\par$(1)$ If $R$ is left $\sigma$-idempotent reflexive then $R[x,\sigma]$ and $R[[x,\sigma]]$ are left idempotent reflexive.
\par$(2)$ If $R$ is right $\sigma$-idempotent reflexive then $R[x,\sigma]$ and $R[[x,\sigma]]$ are right idempotent reflexive.
\par$(3)$ If $R$ is $\sigma$-idempotent reflexive then $R[x,\sigma]$ and $R[[x,\sigma]]$ are idempotent reflexive.
\end{corollary}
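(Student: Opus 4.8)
The plan is to deduce this entirely from Propositions \ref{prop1} and \ref{prop2}, since the hypotheses of a left (resp., right, resp., two-sided) $\sigma$-idempotent reflexive ring are tailored precisely to feed those two propositions. First I would recall the unlabelled observation made right after Proposition \ref{prop1}: if $R$ is $\sigma$-compatible, then $\sigma(e)=e$ for every $e\in Id(R)$, and consequently $\sigma(Re)=\sigma(R)\sigma(e)\subseteq Re$. This is the bridge that lets $\sigma$-compatibility stand in for the technical condition ``$\sigma(Re)\subseteq Re$ for all $e\in Id(R)$'' appearing in Proposition \ref{prop1}.

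For part $(1)$: by definition, $R$ left $\sigma$-idempotent reflexive means $R$ is left idempotent reflexive and $\sigma$-compatible. By the observation above, the hypothesis of Proposition \ref{prop1} holds, so Proposition \ref{prop1}$(1)$ gives that $R[x,\sigma]$ and $R[[x,\sigma]]$ are left idempotent reflexive. For part $(2)$: $R$ right $\sigma$-idempotent reflexive means $R$ is right idempotent reflexive and $\sigma$-compatible, so Proposition \ref{prop2}$(1)$ applies directly and yields that $R[x,\sigma]$ and $R[[x,\sigma]]$ are right idempotent reflexive. For part $(3)$: $R$ $\sigma$-idempotent reflexive means it is both left and right $\sigma$-idempotent reflexive, so combining $(1)$ and $(2)$ shows $R[x,\sigma]$ and $R[[x,\sigma]]$ are both left and right idempotent reflexive, i.e., idempotent reflexive.

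There is essentially no obstacle here; the only point requiring a word of care is making explicit that $\sigma$-compatibility implies $\sigma(Re)\subseteq Re$, so that Proposition \ref{prop1} is legitimately invoked in part $(1)$ — but this has already been argued in the text. Accordingly the written proof can be as short as: ``$(1)$ and $(2)$ follow from Propositions \ref{prop1}$(1)$ and \ref{prop2}$(1)$ respectively, using that $\sigma$-compatibility forces $\sigma(Re)\subseteq Re$ for all $e\in Id(R)$; $(3)$ follows by combining $(1)$ and $(2)$.''
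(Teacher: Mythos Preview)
Your proposal is correct and matches the paper's approach: the paper's proof is the single line ``Immediate from Proposition \ref{prop1}.'' Your version is in fact slightly more accurate, since part $(2)$ requires Proposition \ref{prop2}$(1)$ rather than Proposition \ref{prop1}$ $ (which only gives the converse direction for the right-handed case), so the paper's citation of Proposition \ref{prop1} alone appears to be a minor slip that you have implicitly corrected.
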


\begin{proof}Immediate from Proposition \ref{prop1}.
\end{proof}

\section{Abelian property}

In this section, $\sigma$ is an endomorphism of $R$ such that $\sigma(1)=1$. We can easily see that $Id(R)\subseteq Id(R[x])$. However $Id(R)\neq Id(R[x])$. We can take the next example.

\begin{example}Let $F$ be a field and consider the ring
 $R=\left(
\begin{array}{cc}
F & F \\
0 & F \\
\end{array}
\right)$, then
$e=\left(\begin{array}{cc}
1 & 0 \\
0 & 0 \\
\end{array}
\right)+\left( \begin{array}{cc}
0 & 1 \\
0 & 0 \\
\end{array}
\right)x$ is an element of $Id(R[x])$. But $e\not\in Id(R)$.
\end{example}

According to Kanwara et al. \cite[Corollary 6]{matzuk/2013}, a ring $R$ is abelian if and only if $R[x]$ is abelian. In the next, we give a generalization to skew polynomial rings and skew power series rings.

\begin{lemma}\label{lemma}Let $R$ be an abelian ring such that $\sigma(Re)\subseteq Re$ for all $e\in Id(R)$. For any $e=\sum_{i=0}^ne_ix^i\in Id(R[x,\sigma])$ $($resp., $e=\sum_{i=0}^{\infty}e_ix^i\in Id(R[[x,\sigma]])$$)$, we have $ee_0=e_0$ and $e_0e=e$.
\end{lemma}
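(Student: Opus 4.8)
The plan is to work directly with the idempotent equation $e^2 = e$ in $R[x,\sigma]$, extract the resulting system of coefficient equations, and show by induction on $i$ that each $e_i$ lies in the two-sided ideal generated by $e_0$, in fact that $e_i e_0 = e_i$ and $e_0 e_i = e_i$ for all $i$; summing these over $i$ (with the appropriate powers of $x$) will yield $e e_0 = e_0$ and $e_0 e = e$ (using $e_0^2 = e_0$ for the degree-zero term and absorbing the higher terms). Writing out $e^2 = e$ coefficient-wise gives
$$e_0^2 = e_0, \qquad e_0 e_1 + e_1 \sigma(e_0) = e_1, \qquad e_0 e_2 + e_1 \sigma(e_1) + e_2 \sigma^2(e_0) = e_2, \ \ldots$$
and in general $\sum_{j=0}^{k} e_j \sigma^j(e_{k-j}) = e_k$. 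The first equation says $e_0 \in Id(R)$, so it is central since $R$ is abelian.

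First I would handle $e_0 e = e$, i.e. $e_0 e_k = e_k$ for all $k$. Multiply the degree-$k$ equation on the left by $e_0$. By centrality of $e_0$ and the hypothesis $\sigma(Re_0) \subseteq Re_0$ (which forces $e_0 \sigma^j(r) = \sigma^j(e_0 r')$-type absorptions; more carefully, since $e_0$ is central and $\sigma(Re_0)\subseteq Re_0$ implies $\sigma^j(e_0 r) \in Re_0$, so $\sigma^j(e_0 r) = \sigma^j(e_0 r) e_0$), each term $e_0 e_j \sigma^j(e_{k-j})$ can be rewritten and the induction hypothesis $e_0 e_j = e_j$ for $j < k$ applied. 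The upshot is that left-multiplying by $e_0$ changes the degree-$k$ equation into $e_0 e_k = e_k$ directly, once the lower-order identities are known; the base case $e_0 e_0 = e_0$ is the first equation. Symmetrically, for $e e_0 = e_0$ I would right-multiply the degree-$k$ equation by $e_0$: the term $e_k \sigma^k(e_0) e_0 = e_k \sigma^k(e_0)$ (again using $\sigma^k(Re_0)\subseteq Re_0$, so $\sigma^k(e_0) = \sigma^k(e_0) e_0$ after noting $\sigma^k(e_0)\in Re_0$), while all terms with $j<k$ end in $\sigma^j(e_{k-j}) e_0 = \sigma^j(e_{k-j} e_0) = \sigma^j(e_{k-j})$ once we know $e_{k-j} e_0 = e_{k-j}$ inductively — so the degree-$k$ equation becomes $e_k \sigma^k(e_0) + (\text{stuff}) = e_k$, which combined with the original equation gives what we need. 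For $k=0$ this is just $e_0^2 = e_0$.

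The genuinely delicate step is the bookkeeping with $\sigma$: the hypothesis only gives $\sigma(Re) \subseteq Re$, not $\sigma(e) = e$, so I cannot simply pull $e_0$ through a $\sigma^j$. The key observation to make precise is that $\sigma(Re_0)\subseteq Re_0$ iterates to $\sigma^j(Re_0)\subseteq Re_0$, hence for any $r\in R$ we have $\sigma^j(re_0) \in Re_0$ and therefore $\sigma^j(re_0) = \sigma^j(re_0)e_0$; combined with centrality of $e_0$ this lets me slide $e_0$ from one side to the other past $\sigma^j$ whenever it is adjacent to an element already known to be a right multiple of $e_0$. Once this lemma-within-the-proof is isolated, both induction steps are short. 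For the skew power series case the argument is verbatim the same: the coefficient equations have exactly the same form for each fixed degree $k$, and the induction on $k$ never refers to the (possibly infinite) total degree, so nothing changes.
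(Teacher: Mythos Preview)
Your argument for $e_0e=e$ is essentially correct and matches the paper's approach: each term of the degree-$k$ idempotent equation lies in $Re_0R=e_0R$ (using $\sigma^j(Re_0)\subseteq Re_0$ and centrality of $e_0$), hence $e_k\in e_0R$ and $e_0e_k=e_k$. Your left-multiplication phrasing works once one observes $e_0\sigma^k(e_0)=\sigma^k(e_0)$, which you do note.

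The second half, however, has a real gap. To get $ee_0=e_0$ you must prove $e_i\sigma^i(e_0)=0$ for $i\geq 1$, since $ee_0=\sum_i e_i\sigma^i(e_0)x^i$. Your plan is to right-multiply the degree-$k$ equation by $e_0$. But every factor $\sigma^j(e_{k-j})$ already lies in $Re_0$ (because $e_{k-j}\in Re_0$ from the first half, and $\sigma^j(Re_0)\subseteq Re_0$), so right-multiplying by $e_0$ returns the \emph{same} equation; no new information is extracted, and your phrase ``combined with the original equation gives what we need'' is empty. Note also that your displayed identity $\sigma^j(e_{k-j})e_0=\sigma^j(e_{k-j}e_0)$ is not valid in general: the hypothesis gives only $\sigma(Re_0)\subseteq Re_0$, not $\sigma(e_0)=e_0$, so you cannot pull $e_0$ inside $\sigma^j$.

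The paper's trick is different: after using $e_0e_k=e_k$ to reduce the degree-$k$ equation to $\sum_{j=1}^{k}e_j\sigma^j(e_{k-j})=0$, one multiplies on the right by $\sigma^k(e_0)$ (an idempotent, hence central). For $j<k$ the term becomes $e_j\,\sigma^j\bigl(e_{k-j}\sigma^{k-j}(e_0)\bigr)$, which vanishes by the inductive hypothesis $e_{k-j}\sigma^{k-j}(e_0)=0$; for $j=k$ one gets $e_k\sigma^k(e_0)^2=e_k\sigma^k(e_0)$. This yields $e_k\sigma^k(e_0)=0$, completing the induction. Multiplying by $\sigma^k(e_0)$ rather than $e_0$ is the missing idea.
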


\begin{proof}Let $e=\sum_{i=0}^ne_ix^i\in Id(R[x,\sigma])$, then we have the following system of equations
$$e_0^2=e_0\eqno(0)$$
$$e_0e_1+e_1\sigma(e_0)=e_1\qquad\qquad\;\quad\eqno(1)$$
$$e_0e_2+e_1\sigma(e_1)+e_2\sigma^2(e_0)=e_2\qquad\qquad\qquad\qquad\qquad\eqno(2)$$
$$\vdots$$
$$e_0e_n+e_1\sigma(e_{n-1})+\cdots+e_n\sigma^n(e_0)=e_n\qquad\qquad\qquad\qquad\qquad\qquad\;\eqno(n)$$
Since abelian rings are idempotent reflexive, then by the proof of Proposition \ref{prop1}, we have $e_i\in Re_0R=e_0R$ for all $0\leq i\leq n$, because $e_0$ is central. Hence $e_i=e_0e_i=e_ie_0$ for all $0\leq i\leq n$ and so $e_0e=e\;\;(1')$.
\NL On the other hand, from equations $(1)$ and $(1')$, we have $e_1\sigma(e_0)=0$ . Also, we have $e_1\sigma(e_1)+e_2\sigma^2(e_0)=0\;(2')$ from equations $(2)$ and $(1')$. But $\sigma^2(e_0)$ is an idempotent, then equation $(2')$ becomes $e_1\sigma(e_1)\sigma^2(e_0)+e_2\sigma^2(e_0)=e_1\sigma[e_1\sigma(e_0)]+e_2\sigma^2(e_0)=0$, since $e_1\sigma(e_0)=0$ we get $e_2\sigma^2(e_0)=0$. Continuing this procedure yields $e_i\sigma^i(e_0)=0$ for all $1\leq i\leq n$. Thus $ee_0=e_0$. With similar method we get the result for $R[[x,\sigma]]$.
\end{proof}

\begin{lemma}\label{lemma2}Let $R$ be a ring and $\sigma$ an endomorphism of $R$. If $R$ satisfies the condition $(\mathcal{C_{\sigma}})$ then for any $e^2=e\in R$, we have $\sigma(e)=e$.
\end{lemma}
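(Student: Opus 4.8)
If $R$ satisfies the condition $(\mathcal{C_\sigma})$, then for any $e^2 = e \in R$ we have $\sigma(e) = e$.

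The plan is to work with the idempotent $e$ and its complement $1 - e$, and exploit the defining implication of $(\mathcal{C_\sigma})$ — namely $a\sigma(b) = 0 \Rightarrow ab = 0$ — starting from trivially available equations that involve $\sigma$ applied to an idempotent. First I would observe that since $\sigma$ is a ring endomorphism with $\sigma(1) = 1$, the element $\sigma(e)$ is again idempotent and $\sigma(1-e) = 1 - \sigma(e)$. The two obvious relations to feed into $(\mathcal{C_\sigma})$ are $e \cdot (1 - e) = 0$ and $(1-e) \cdot e = 0$. Applying $\sigma$ to the arguments does not immediately help, so instead I would run the implication in the form it is stated: I need expressions of the shape $a\sigma(b)$ that vanish.

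The key step is to produce such vanishing products. Consider $(1-e)\sigma(e)$; I claim it need not obviously be zero, so instead I would start from $e(1-e) = 0$ and rewrite $1 - e$ as something whose image under $\sigma$ I can control. A cleaner route: apply $(\mathcal{C_\sigma})$ to deduce one inclusion at a time. From $\sigma(e)\sigma(1-e) = \sigma(e(1-e)) = \sigma(0) = 0$, i.e. $\sigma(e)\sigma(1-e) = 0$; but this has $\sigma$ on the \emph{left} factor too, so $(\mathcal{C_\sigma})$ does not directly apply. The honest approach is: take $a = e$ and $b = 1-e$; we do \emph{not} know $e\sigma(1-e) = 0$ a priori. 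So the real mechanism must be different. I would instead use: since $e \cdot e = e$, we get $e - e\cdot e = 0$, and since $\sigma(e)$ is idempotent, $\sigma(e) - \sigma(e)\sigma(e) = 0$, i.e. $\sigma(e)\bigl(1 - \sigma(e)\bigr) = 0 = \sigma(e)\sigma(1-e)$. Now here is the trick: write this as $\sigma(e)\sigma(1 - e) = 0$ and note $1 - e$ itself is idempotent, so apply the hypothesis symmetrically — actually the clean statement to invoke is that $(\mathcal{C_\sigma})$ forces, for idempotents, $\sigma(e) e = e$ and $e\sigma(e) = \sigma(e)$ by considering the products $\bigl(\sigma(e)(1-e)\bigr)$ and $\bigl((1-e)\sigma(e)\bigr)$ together with $e\sigma(1-e)$-type relations obtained from the earlier lemma's style of reasoning.

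Concretely, the steps I would carry out are: (i) note $\sigma(1-e) = 1 - \sigma(e)$ and that $\sigma(e)$ is idempotent; (ii) from $(1-e)e = 0$ we want $(1-e)\sigma(e) = 0$ — for this, observe $(1-e)e = 0$ gives, via $(\mathcal{C_\sigma})$ applied with $a = 1-e$, $b = e$, directly $(1-e)\sigma(e) = 0$? No — $(\mathcal{C_\sigma})$ goes from $a\sigma(b) = 0$ to $ab = 0$, the wrong direction. So instead I use it as: we need to \emph{establish} $(1-e)e = 0$ from $(1-e)\sigma(e) = 0$, which is backwards. The correct deduction: apply $(\mathcal{C_\sigma})$ to $a = e$, $b = 1 - e$ after first checking $e\sigma(1-e) = 0$; and $e\sigma(1-e) = e(1-\sigma(e)) = e - e\sigma(e)$, so I need $e\sigma(e) = e$. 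The main obstacle, and the crux of the whole lemma, is therefore establishing $e\sigma(e) = e$ and $\sigma(e)e = e$ using only the one-directional implication $(\mathcal{C_\sigma})$; I expect this follows by considering the product $\sigma(e)(1-e)$: since $\sigma(e) = \sigma(e^2) = \sigma(e)\sigma(e)$, and applying $(\mathcal{C_\sigma})$ suitably to bridge $\sigma(e)$ and $e$ via $(1-e)$. Once $e\sigma(e) = e = \sigma(e)e$ and $(1-e)\sigma(e) = \sigma(e) - e$ is forced to be $0$ symmetrically, we conclude $\sigma(e) = e\sigma(e) = e$. I would then remark that $\sigma$-compatible rings satisfy $(\mathcal{C_\sigma})$, so this recovers the earlier observation that $\sigma$-compatibility fixes idempotents.
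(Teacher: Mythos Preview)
Your proposal circles around the correct idea but never executes it, and the key obstacle you identify is in fact not an obstacle at all. You write that from $\sigma(e)\sigma(1-e)=\sigma\bigl(e(1-e)\bigr)=0$ one has an expression of the form $a\sigma(b)=0$, \emph{but} then dismiss it because ``this has $\sigma$ on the left factor too, so $(\mathcal{C_\sigma})$ does not directly apply.'' This is the crucial misstep: the condition $(\mathcal{C_\sigma})$ places no restriction whatsoever on the left factor $a$. Taking $a=\sigma(e)$ and $b=1-e$ is perfectly legitimate, and the implication $a\sigma(b)=0\Rightarrow ab=0$ gives $\sigma(e)(1-e)=0$, i.e.\ $\sigma(e)=\sigma(e)e$. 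Symmetrically, from $(1-\sigma(e))\sigma(e)=0$ with $a=1-\sigma(e)$ and $b=e$ one obtains $(1-\sigma(e))e=0$, i.e.\ $e=\sigma(e)e$. Combining the two yields $\sigma(e)=\sigma(e)e=e$, and the proof is complete in three lines.

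This is exactly the paper's argument. So the route you rejected as inapplicable \emph{is} the intended proof, and the rest of your proposal --- trying to establish $e\sigma(e)=e$ directly, or to deduce $(1-e)\sigma(e)=0$ from $(1-e)e=0$ --- runs against the one-directional nature of $(\mathcal{C_\sigma})$ for good reason: those deductions genuinely do not follow without the two-sided $\sigma$-compatibility. The fix is simply to allow $a$ to carry a $\sigma$; once you do, the argument is immediate.
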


\begin{proof}Let $e\in Id(R)$, by the condition $(\mathcal{C_{\sigma}})$ and from $\sigma(e)(1-\sigma(e))=(1-\sigma(e))\sigma(e)=0$, we get $\sigma(e)=\sigma(e)e$ and $e=\sigma(e)e$. Therefore $\sigma(e)=e$.
\end{proof}

\begin{proposition}\label{prop3}If $R$ satisfies the condition $(\mathcal{C_{\sigma}})$. Then $R$ is abelian if and only if $R[x,\sigma]$ is abelian if and only if $R[[x,\sigma]]$ is abelian.
\end{proposition}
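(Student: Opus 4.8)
The plan is to prove the chain of equivalences by handling the trivial implications directly and then reducing the substantive one to Lemmas \ref{lemma} and \ref{lemma2}.

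First I would observe that, since $\sigma(1)=1$, both $R[x,\sigma]$ and $R[[x,\sigma]]$ contain $R$ as a subring with $Id(R)\subseteq Id(R[x,\sigma])$ and $Id(R)\subseteq Id(R[[x,\sigma]])$. Consequently, if either extension is abelian, then every idempotent of $R$ is central in that extension, hence central in $R$; this disposes of the two ``only if'' directions at once.

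For the remaining directions, assume $R$ is abelian and satisfies the condition $(\mathcal{C_{\sigma}})$. The key preliminary step is to invoke Lemma \ref{lemma2}: the condition $(\mathcal{C_{\sigma}})$ gives $\sigma(e)=e$ for every $e\in Id(R)$, and therefore $\sigma(re)=\sigma(r)\sigma(e)=\sigma(r)e\in Re$, so the hypothesis ``$\sigma(Re)\subseteq Re$ for all $e\in Id(R)$'' of Lemma \ref{lemma} is met. Next, given any idempotent $e(x)=\sum_{i=0}^n e_ix^i\in Id(R[x,\sigma])$ (the power series case being identical), I would extract from the proof of Lemma \ref{lemma} the two facts $e_i=e_ie_0$ for all $i$ and $e_i\sigma^i(e_0)=0$ for $1\le i\le n$. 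Since $\sigma^i(e_0)=e_0$ by Lemma \ref{lemma2}, these combine to give $e_i=e_ie_0=0$ for $1\le i\le n$, so $e(x)=e_0\in Id(R)$. Finally, because $R$ is abelian, $e_0$ is central in $R$, and $xe_0=\sigma(e_0)x=e_0x$, so $e_0$ commutes with $R$ and with $x$; hence $e_0$ is central in $R[x,\sigma]$ (resp.\ $R[[x,\sigma]]$), and the extension is abelian.

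I expect no serious obstacle here: the only real content is Lemma \ref{lemma2}, which forces $\sigma$ to fix all idempotents, and once that is combined with Lemma \ref{lemma} every idempotent of the extension collapses to a (central) constant. The rest is bookkeeping with the defining commutation rule $xr=\sigma(r)x$.
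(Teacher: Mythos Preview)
Your argument is correct and follows essentially the same route as the paper: both invoke Lemma~\ref{lemma} to obtain $e_i=e_ie_0$ and $e_i\sigma^i(e_0)=0$, then use $(\mathcal{C_\sigma})$ to force $e_i=0$ for $i\ge 1$. The only cosmetic difference is that you pass through Lemma~\ref{lemma2} to get $\sigma^i(e_0)=e_0$ first, whereas the paper applies $(\mathcal{C_\sigma})$ directly (iteratively) to $e_i\sigma^i(e_0)=0$; your version is also a bit more explicit about the trivial directions and about why $e_0$ is central in the extension.
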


\begin{proof}It suffices to show the necessary condition, let $e=\sum_{i=0}^ne_ix^i\in Id(R[x,\sigma])$. From Lemma \ref{lemma}, we get $e_i=e_ie_0$ and $e_i\sigma^i(e_0)=0$ for all $1\leq i\leq n$. By the condition $(\mathcal{C_{\sigma}})$, we have $e_i=0$ for all $1\leq i\leq n$, then $e=e_0\in Id(R)$. Therefore $R[x,\sigma]$ is abelian. The same method for $R[[x,\sigma]]$.
\end{proof}

\begin{example}\label{ex4}Let $F$ be a field. Consider the ring
$$R=\set{\left(
          \begin{array}{cccc}
            a & b & 0 & 0 \\
            0 & a & 0 & 0 \\
            0 & 0 & u & v \\
            0 & 0 & 0 & u \\
          \end{array}
        \right)\mid a,b,u,v\in F
}$$ and $\sigma$ an endomorphism of $R$ defined by $$\sigma\parth{\left(
          \begin{array}{cccc}
            a & b & 0 & 0 \\
            0 & a & 0 & 0 \\
            0 & 0 & u & v \\
            0 & 0 & 0 & u \\
          \end{array}
        \right)}=\left(
          \begin{array}{cccc}
            u & v & 0 & 0 \\
            0 & u & 0 & 0 \\
            0 & 0 & a & b \\
            0 & 0 & 0 & a \\
          \end{array}
        \right)\mathrm{for\;all}\;\left(
          \begin{array}{cccc}
            a & b & 0 & 0 \\
            0 & a & 0 & 0 \\
            0 & 0 & u & v \\
            0 & 0 & 0 & u \\
          \end{array}
        \right)\in R$$
\NL$(1)$ $R$ is abelian. But $R[x;\sigma]$ is not abelian by \cite[Example 2.17]{agayev/2009}.
\NL$(2)$ $R$ does not satisfy the condition $(\mathcal{C_{\sigma}})$. Let $e_{ij}$ denote the $4\times 4$ matrix units having alone $1$ as its $(i,j)$-entry and all other entries $0$.
\NL Consider $$e=\left(
          \begin{array}{cccc}
            1 & 0 & 0 & 0 \\
            0 & 1 & 0 & 0 \\
            0 & 0 & 0 & 0 \\
            0 & 0 & 0 & 0 \\
          \end{array}
        \right)\in Id(R)$$
Then
$$\sigma(e)=\left(
          \begin{array}{cccc}
            0 & 0 & 0 & 0 \\
            0 & 0 & 0 & 0 \\
            0 & 0 & 1 & 0 \\
            0 & 0 & 0 & 1 \\
          \end{array}
        \right)\neq e$$
By Lemma \ref{lemma2}, the condition $(\mathcal{C_{\sigma}})$ is not satisfied.
\end{example}

\begin{example}\label{ex3}Let $\Z_2$ is the ring of integers modulo $2$, take $R=\Z_2\oplus\Z_2$ with the usual addition and multiplication. Consider $\sigma\colon R\rightarrow R$ defined by $\sigma((a,b))=(b,a)$. Clearly $R$ is abelian because it is commutative. Let $e=(1,0)+(0,1)x\in Id(R[x,\sigma])$.  Since $(0,1)e=(0,1)x\neq e(0,1)=0$ then $e$ is not central. Thus $R[x,\sigma]$ is not abelian. Note that $R$ does not satisfy the condition $(\mathcal{C_{\sigma}})$, because $(0,1)\sigma(0,1)=0$ but $(0,1)^2\neq 0$.
\end{example}

We can see from Examples \ref{ex4} and \ref{ex3} that the condition $(\mathcal{C_{\sigma}})$ in Proposition \ref{prop3} is not superfluous.

\begin{corollary}\label{corollary}Let $R$ be a $\sigma$-compatible ring. Then  $R$ is abelian if and only if $R[x,\sigma]$ is abelian if and only if $R[[x,\sigma]]$ is abelian
\end{corollary}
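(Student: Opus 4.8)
The plan is to deduce Corollary \ref{corollary} directly from Proposition \ref{prop3} by verifying that every $\sigma$-compatible ring satisfies the condition $(\mathcal{C_{\sigma}})$. Recall that $(\mathcal{C_{\sigma}})$ asks: whenever $a\sigma(b)=0$ with $a,b\in R$, then $ab=0$. But $\sigma$-compatibility is precisely the statement that for all $a,b\in R$, $ab=0$ if and only if $a\sigma(b)=0$; in particular the ``if'' direction of this equivalence is exactly $(\mathcal{C_{\sigma}})$. This observation is already recorded in the excerpt immediately after the definition of $(\mathcal{C_{\sigma}})$ (``Clearly, $\sigma$-compatible rings satisfy the condition $(\mathcal{C_{\sigma}})$''), so the corollary is truly immediate.

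Concretely, I would write: let $R$ be a $\sigma$-compatible ring. Then $R$ satisfies the condition $(\mathcal{C_{\sigma}})$, since if $a\sigma(b)=0$ then $ab=0$ by $\sigma$-compatibility. Applying Proposition \ref{prop3} with this ring $R$ and endomorphism $\sigma$, we conclude that $R$ is abelian if and only if $R[x,\sigma]$ is abelian if and only if $R[[x,\sigma]]$ is abelian. That is the entire argument.

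There is essentially no obstacle here: the corollary is a specialization of Proposition \ref{prop3} under a hypothesis that is strictly stronger than $(\mathcal{C_{\sigma}})$. The only thing to be slightly careful about is the standing assumption in this section that $\sigma(1)=1$; since $\sigma$-compatible endomorphisms automatically fix idempotents (as noted in the remark after Proposition \ref{prop2}, and in Lemma \ref{lemma2}), in particular $\sigma(1)=1$, so the blanket hypothesis of Section 3 is met and Proposition \ref{prop3} applies without further comment. Thus a one- or two-line proof — ``Immediate from Proposition \ref{prop3}, since every $\sigma$-compatible ring satisfies $(\mathcal{C_{\sigma}})$.'' — suffices.
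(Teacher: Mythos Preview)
Your proposal is correct and follows exactly the paper's approach: the paper's proof is simply ``Obvious from Proposition \ref{prop3},'' and you have spelled out the one-line justification (that $\sigma$-compatibility implies $(\mathcal{C_{\sigma}})$) already noted in the text. The extra care you take regarding $\sigma(1)=1$ is a nice touch but not strictly needed for the write-up.
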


\begin{proof}Obvious from Proposition \ref{prop3}.
\end{proof}

\begin{corollary}[{\cite[Corollary 6]{matzuk/2013}}]Let $R$ be a ring. Then
$R$ is abelian if and only if $R[x]$ is abelian if and only if $R[[x]]$ is abelian.
\end{corollary}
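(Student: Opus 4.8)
The plan is to obtain the statement as the special case $\sigma = \mathrm{id}_R$ of Proposition~\ref{prop3} (or, equivalently, Corollary~\ref{corollary}). First I would observe that the identity endomorphism trivially satisfies the condition $(\mathcal{C_\sigma})$: if $a\,\mathrm{id}_R(b) = 0$ then of course $ab = 0$, since $\mathrm{id}_R(b) = b$. Hence $R$ meets the hypothesis of Proposition~\ref{prop3} with $\sigma = \mathrm{id}_R$. Second, I would note that with $\sigma = \mathrm{id}_R$ the skew polynomial ring $R[x;\mathrm{id}_R]$ is just the ordinary polynomial ring $R[x]$ (the Ore commutation rule $xr = \mathrm{id}_R(r)x = rx$ collapses to the usual one), and likewise $R[[x;\mathrm{id}_R]] = R[[x]]$. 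Substituting these identifications into the conclusion of Proposition~\ref{prop3} gives exactly: $R$ is abelian if and only if $R[x]$ is abelian if and only if $R[[x]]$ is abelian.

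There is essentially no obstacle here, since all the work has been done in Proposition~\ref{prop3}; the only thing to be careful about is that Proposition~\ref{prop3} is stated for an endomorphism $\sigma$, and one must simply verify that $\mathrm{id}_R$ is a legitimate choice of $\sigma$ satisfying $\sigma(1) = 1$ (trivial) and $(\mathcal{C_\sigma})$ (trivial, as above). If one prefers to avoid even invoking the condition $(\mathcal{C_\sigma})$, the same conclusion follows directly from Corollary~\ref{corollary} by noting that $R$ is $\mathrm{id}_R$-compatible, since $ab = 0 \iff a\,\mathrm{id}_R(b) = 0$ is a tautology.

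Concretely, the proof I would write is: \emph{Take $\sigma = \mathrm{id}_R$. Then $R$ is $\sigma$-compatible (equivalently, $R$ satisfies $(\mathcal{C_\sigma})$), and $R[x;\sigma] = R[x]$, $R[[x;\sigma]] = R[[x]]$. The result now follows from Proposition~\ref{prop3} (or Corollary~\ref{corollary}).} This is a one-line deduction, so I would not expand it further.
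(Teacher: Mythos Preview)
Your proposal is correct and matches the paper's own proof exactly: the paper simply writes ``It suffices to take $\sigma=id_R$ in Proposition~\ref{prop3}.'' Your additional remarks (that $\mathrm{id}_R$ trivially satisfies $(\mathcal{C_\sigma})$ and that $R[x;\mathrm{id}_R]=R[x]$, $R[[x;\mathrm{id}_R]]=R[[x]]$) are just the obvious unpacking of that one line.
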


\begin{proof}It suffices to take $\sigma=id_R$ in Proposition \ref{prop3}.
\end{proof}

\begin{corollary}[{\cite[Theorem 2.21]{agayev/2009}}]\label{corollary2}If $R$ is $\sigma$-abelian then $R[x,\sigma]$ and $R[[x,\sigma]]$ are abelian.
\end{corollary}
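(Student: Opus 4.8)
The plan is to reduce the statement directly to Proposition \ref{prop3}. By the definition of a $\sigma$-abelian ring (following Agayev et al.), saying that $R$ is $\sigma$-abelian means precisely that $R$ is abelian and $\sigma$-compatible. So the first step is to observe that $\sigma$-compatibility implies the condition $(\mathcal{C_{\sigma}})$: if $a\sigma(b)=0$ with $a,b\in R$, then $\sigma$-compatibility forces $ab=0$, which is exactly $(\mathcal{C_{\sigma}})$. This implication is already recorded in the remark preceding Lemma \ref{lemma2}.

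With that in hand, the second step is simply to invoke Proposition \ref{prop3} (equivalently, Corollary \ref{corollary}): since $R$ satisfies $(\mathcal{C_{\sigma}})$ and $R$ is abelian, we conclude that both $R[x,\sigma]$ and $R[[x,\sigma]]$ are abelian, and in fact $Id(R[x,\sigma])=Id(R)$ and $Id(R[[x,\sigma]])=Id(R)$. No further computation is needed. Alternatively, one could give a self-contained argument mirroring the proof of Proposition \ref{prop3}: for $e=\sum_{i=0}^n e_ix^i\in Id(R[x,\sigma])$, Lemma \ref{lemma} yields $e_i=e_ie_0$ and $e_i\sigma^i(e_0)=0$ for $1\leq i\leq n$, and then $\sigma$-compatibility (hence $(\mathcal{C_{\sigma}})$) gives $e_i=0$ for all $i\geq 1$, so $e=e_0\in Id(R)$; the power series case is identical.

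There is essentially no obstacle here. The only point requiring care is matching hypotheses: one must note that the "$\sigma$-abelian" assumption genuinely supplies both the abelian property and the condition $(\mathcal{C_{\sigma}})$ demanded by Proposition \ref{prop3}, and that the standing assumption $\sigma(1)=1$ of this section is in force so that $Id(R)\subseteq Id(R[x,\sigma])$. Everything substantive is already contained in the earlier machinery, namely Lemma \ref{lemma}, Lemma \ref{lemma2}, and the annihilator bookkeeping carried out in the proof of Proposition \ref{prop1}.
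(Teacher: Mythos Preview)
Your proposal is correct and follows exactly the paper's approach: the paper's proof is simply ``Clearly from Proposition \ref{prop3},'' and you have spelled out precisely why that invocation is legitimate (namely, $\sigma$-abelian $\Rightarrow$ abelian $+$ $\sigma$-compatible $\Rightarrow$ abelian $+$ $(\mathcal{C_{\sigma}})$). Nothing more is needed.
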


\begin{proof}Clearly from Proposition \ref{prop3}.
\end{proof}

\begin{remark} If for a ring $R$ we have $Id(R)=\{0,1\}$, then $Id(R[x,\sigma])=\{0,1\}$. Indeed, let $e=\sum_{i=0}^ne_ix^i\in Id(R[x,\sigma])$ and consider the system cited in the proof of Lemma \ref{lemma}. If $e_0=0$ or $e_0=1$, we can easily see that $e_i=0$ for all $1\leq i\leq n$.
\end{remark}

The following example shows that the converse of Corollary \ref{corollary2} is not true.

\begin{example}\label{ex2}Let $\K$ be a field and $R=\K[t]$ a polynomial ring over $\K$
with the endomorphism $\sigma$ given by $\sigma(f(t))=f(0)$  for all
$f(t)\in R$. Since $R$ is commutative then it is abelian. We claim that $R$ is not $\sigma$-abelian. Indeed, take $f=a_0+a_1t+a_2t^2+\cdots+a_nt^n$ and
$g=b_1t+b_2t^2+\cdots+b_mt^m$, since $g(0)=0$ so, $f\sigma(g)=0$ but $fg\neq 0$. Therefore $R$ is not $\sigma$-compatible. Since $R$ has only two idempotents $0$ and $1$ then $Id(R[x,\sigma])=Id(R)$ and thus $R[x,\sigma]$ is abelian.
\end{example}

%\begin{example}Consider the ring $R=\set{\begin{pmatrix}
%  a & t \\
%  0 & a
%\end{pmatrix}|\; a\in \Z\;,t\in \Q},$ where $\Z$ and $\Q$ are the
%set of all integers and all rational numbers, respectively. The ring
%$R$ is commutative then it's abelian, let $\sigma\colon R\rightarrow R$ be an
%automorphism defined by $\sigma\left(\begin{pmatrix}
%  a & t \\
%  0 & a
%\end{pmatrix}\right)=\begin{pmatrix}
%  a & t/2 \\
%  0 & a
%\end{pmatrix}$.
%\NL Since $R$ has only two idempotents  $\begin{pmatrix}
%  0 & 0 \\
%  0 & 0
%\end{pmatrix}$ and $\begin{pmatrix}
% 1 & 0 \\
%  0 & 1
%\end{pmatrix}$, it follows that $Id(R[x,\sigma])=Id(R)$. Hence $R[x,\sigma]$ is abelian.
%\end{example}

%\section*{Acknowledgments}
%% References

\end{document}